\newcommand{\Fre}{{Fr\'{e}chet \,}} % Frechet space in short \newcommand{\Fre}{{Fr\'{e}chet \,}}
\newcommand{\BC}{{\mathbb {C}}}
\newcommand{\BR}{{\mathbb {R}}}
\newcommand{\BZ}{{\mathbb {Z}}}
\newcommand{\CS}{{\mathcal {S}}}
\newcommand{\CT}{{\mathcal {T}}}
\newcommand{\GL}{{\mathrm{GL}}}
\newcommand{\ind}{{\mathrm{ind}}}
\newcommand{\Sym}{{\mathrm{Sym}}}
\newcommand{\oH}{\operatorname{H}}
\newcommand{\SE}{\mathsf{E}}
\newcommand{\abs}[1]{\lvert#1\rvert}
\newcommand{\be}{\begin {equation}}
\newcommand{\ee}{\end {equation}}
\newcommand{\bee}{\begin {equation*}}
\newcommand{\eee}{\end {equation*}}
\theoremstyle{Theorem}
\newtheorem{thm}{Theorem}[section]
\newtheorem{prpt}[thm]{Proposition}
\newtheorem{dfnt}[thm]{Definition}
\theoremstyle{Theorem}
\newtheorem{lem}{Lemma}[section]
\theoremstyle{Theorem}
\theoremstyle{Definition}
\newtheorem{dfn}{Definition}[section]
\newtheorem{prpd}[dfn]{Proposition}
\theoremstyle{remark}
\theoremstyle{remark}
\begin{document}

\title[Homological finiteness]{Homological finiteness of representations of almost linear Nash groups}

\author[Y. Bao]{Yixin Bao}
\address{School of Science\\
Harbin Institute of Technology\\
Shenzhen, 518055, China}
\email{mabaoyixin1984@163.com}

\author[Y. Chen]{Yangyang Chen}
\address{School of Science\\
Jiangnan University\\
Wuxi, 214122, China}
\email{8202007345@jiangnan.edu.cn}

\subjclass[2010]{22E41}
\keywords{Schwartz homology, tempered vector bundle, Schwartz sections, homological finiteness}

\maketitle

\begin{abstract}
Let $G$ be an almost linear Nash group, namely, a Nash group that admits a Nash homomorphism with finite kernel to some
$\GL_k(\mathbb R)$.  A smooth \Fre representation $V$ with moderate growth of $G$ is called homologically finite if the Schwartz homology $\oH_{i}^{\CS}(G;V)$ is finite dimensional for every $i\in\BZ$. We show that the space of Schwartz sections 
$\Gamma^{\varsigma}(X,\SE)$ of a tempered $G$-vector bundle $(X,\SE)$ is homologically finite as a representation of $G$, under some mild assumptions.
\end{abstract}

\setcounter{tocdepth}{2}
% \tableofcontents

\section{Introduction}

Let $G$ be an almost linear Nash group, namely, a Nash group that admits a Nash homomorphism $G\rightarrow \GL_k(\mathbb R)$ with finite kernel, for some $k\geq 0$. For details about the structure and basic properties of these groups, we refer the reader to \cite{Su}. Denote by $\CS\mathrm{mod}_G$ the category of smooth \Fre representations of $G$ of moderate growth. Recall that a representation $V$ of $G$ is said to be of moderate growth, if for every continuous seminorm 
$|\cdot|_{\mu}$ on $V$, there is a positive Nash function $f$ on $G$ and a continuous seminorm $|\cdot|_{\nu}$ on $V$ such that
\[
|g.v|_{\mu}\leq f(g)|v|_{\nu}, \quad\text{for all}\quad g\in G, v\in V.
\]
Here and as usual, we do not distinguish a representation with its underlying space.
For every representation $V$ in the category $\CS\mathrm{mod}_G$, the $i$th ($i\in\BZ$) Schwartz homology 
$\oH^{\CS}_{i}(G;V)$ of $V$, which is naturally a locally convex topological vector space, was defined and studied by the second author and Sun in \cite{CS}. It turns out that the Schwartz homologies have many nice features. On the one hand, they coincide with the smooth homologies as defined by Blanc and Wigner \cite{BW}, see \cite[Theorem 1.8]{CS}. This implies that the Schwartz homologies share the properties of the usual smooth homologies. On the other hand, the powerful tool, Shapiro's lemma also holds in the setting of Schwartz homology, now for Schwartz induced representations, which can be taken as a generalization of the usual compactly supported smooth induction, see \cite[Lemma 3.1]{LS}.

\begin{thm}[\cite{CS},Theorem 1.11]\label{shapiro}
Let $H$ be a Nash subgroup of an almost linear Nash group $G$, and let $V_0$ be a representation in $\CS\mathrm{mod}_H$.  Then there is an identification
\[
\oH_{i}^\CS(G;  \ind_H^G V_0)=\oH_{i}^\CS(H; V_0\otimes\delta_{G/H})
\]
of topological vector spaces, for every $i\in \mathbb Z$.
\end{thm}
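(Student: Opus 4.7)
\emph{Proof proposal.} The plan is to realize Schwartz homology as a derived tensor product over the Schwartz algebra $\CS(G)$ and then to obtain the identification by a change-of-rings argument, exactly parallel to the algebraic proof of Shapiro's lemma. First I would recall, in the spirit of \cite{CS}, the interpretation of $\oH_{i}^{\CS}(G;V)$ as the $i$th derived functor of the coinvariants functor $V\mapsto \BC\,\widehat{\otimes}_{\CS(G)} V$; equivalently, $\oH_{i}^{\CS}(G;V)$ is the $i$th homology of $P_{\bullet}\,\widehat{\otimes}_{\CS(G)} V$ for any resolution $P_{\bullet}\to \BC$ by Schwartz-projective $G$-representations.

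Second, I would identify the Schwartz induction with an extension of scalars:
\[
\ind_{H}^{G} V_{0} \;\cong\; \CS(G)\,\widehat{\otimes}_{\CS(H)}\bigl(V_{0}\otimes \delta_{G/H}\bigr),
\]
where $\CS(G)$ is viewed as a right $\CS(H)$-module via right convolution. The modular twist $\delta_{G/H}$ enters exactly as in the algebraic case: it corrects for the discrepancy between the left-invariant measures on $G$ and on $H$, so that the $H$-coinvariants of $\CS(G)$ recover Schwartz functions (or rather Schwartz densities) on $G/H$. Establishing this isomorphism rests on the existence of tempered cross-sections for $G\to G/H$, which is available in the Nash category.

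With these in hand, the third step is the change-of-rings calculation. Restriction from $G$ to $H$ sends any Schwartz-projective resolution $P_{\bullet}\to\BC$ to a complex of $\CS(H)$-modules that still computes $H$-homology: Schwartz projectivity over $\CS(G)$ passes to Schwartz projectivity over $\CS(H)$ via the tempered slice. One then rewrites
\[
P_{\bullet} \,\widehat{\otimes}_{\CS(G)}\, \ind_{H}^{G} V_{0} \;\cong\; P_{\bullet} \,\widehat{\otimes}_{\CS(G)}\bigl(\CS(G)\,\widehat{\otimes}_{\CS(H)} (V_{0}\otimes \delta_{G/H})\bigr) \;\cong\; P_{\bullet} \,\widehat{\otimes}_{\CS(H)} (V_{0}\otimes \delta_{G/H}),
\]
and takes homology. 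The left-hand side computes $\oH_{i}^{\CS}(G;\ind_{H}^{G} V_{0})$ and the right-hand side computes $\oH_{i}^{\CS}(H;V_{0}\otimes \delta_{G/H})$, which gives the claimed identification.

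The main obstacle is analytic rather than algebraic: every tensor product must be completed within the category of \Fre spaces with a moderate-growth action, and I would need to verify that the completed extension-of-scalars functor is exact on Schwartz-projective resolutions and that the resulting isomorphisms are topological. The structural features of almost linear Nash groups---tempered partitions of unity and Nash slices for quotients by closed Nash subgroups---are what make this functional-analytic bookkeeping go through, and they also account for the fact that $\delta_{G/H}$, rather than a full Radon--Nikodym cocycle, is the only modular datum that survives.
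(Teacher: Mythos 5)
This result is quoted verbatim from \cite[Theorem 1.11]{CS}; the present paper supplies no proof of it, so the comparison must be to the argument in \cite{CS}. Your outline is the standard derived change-of-rings route---realize $\oH_\bullet^{\CS}(G;\,\cdot\,)$ as a derived coinvariants functor, rewrite Schwartz induction as an extension of scalars, tensor a $\CS(G)$-projective resolution of $\BC$ against it, and cancel---and this is the correct shape of argument, in the same spirit as the Lie-algebra version in \cite{AGKL}.

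The step that deserves the most scrutiny is the claim that restriction of a Schwartz-projective $\CS(G)$-resolution of $\BC$ to $\CS(H)$ still computes $H$-homology. For the standard projective generators $\CS(G)\widehat{\otimes} W$ this would follow from a tempered identification $\CS(G)\cong\CS(H)\widehat{\otimes}\CS(G/H)$ as $H$-modules, i.e.\ from a global Nash cross-section of $G\to G/H$; but such a section exists only locally in general, since this fibration can be topologically nontrivial. You would therefore have to either patch local slices by a tempered partition of unity and verify that projectivity survives the patching, or run the change of rings in the opposite direction: take an $\CS(H)$-projective resolution of $V_0\otimes\delta_{G/H}$, apply $\ind_H^G$ (which is exact with continuous splittings and visibly sends projective generators to projective generators, since the local-triviality issue is absorbed into the definition of Schwartz sections), and identify coinvariants termwise by Frobenius reciprocity. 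The latter direction is cleaner precisely because preservation of projectivity under $\ind_H^G$ is manifest while under restriction it is not, and it is closer to what one finds in the cited source. Finally, check the placement of $\delta_{G/H}$: whether the twist sits inside the extension-of-scalars identity $\ind_H^G V_0\cong\CS(G)\widehat{\otimes}_{\CS(H)}(V_0\otimes\delta_{G/H})$ or only appears upon passing to coinvariants depends on the Haar-measure normalizations and on which $\CS(H)$-module structure you put on $\CS(G)$; the degree-zero identity $\bigl(\ind_H^G V_0\bigr)_G\cong\bigl(V_0\otimes\delta_{G/H}\bigr)_H$ should be written out explicitly to pin this down.
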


Here $\ind_H^G$ denotes the Schwartz induction as defined by du Cloux, see \cite[Section 2]{Fd}. $\delta_G$ denotes the modular character of the group $G$ and $\delta_{G/H}:=(\delta_{G})|_H\cdot\delta_{H}^{-1}$. It should be pointed out that the Schwartz homology $\oH_i^{\CS}(G;V)$ as a topological vector space, is not necessary Hausdorff. For applications to representation theory, it is important to show that $\oH_i^{\CS}(G;V)$ is Hausdorff, at least in some cases we are interested in. In this article, we study the Hausdorffness property of the Schwartz homologies of representations of almost linear Nash groups.

\begin{dfnt}
A representation $V$ in the category $\CS\mathrm{mod}_G$ is said to be homologically separated if the Schwartz homology 
$\oH_i^{\CS}(G;V)$ is separated (Hausdorff) for every $i\in\BZ$. $V$ is said to be homologically finite if the Schwartz homology 
$\oH_i^{\CS}(G;V)$ is finite dimensional for every $i\in\BZ$.
\end{dfnt}

For the notion of homologically finite representations in the Lie algebra homology setting, see \cite[Definition 3.1.2]{AGKL}.
We study the Schwartz homologies $\oH^{\CS}_{\bullet}(G;V)$ of $V$ at all degrees together instead of a single one. Moreover, the stronger property finiteness is much easier to deal with than the Hausdorffness.

\begin{prpt}\label{fis}
Every homologically finite representation is automatically homologically separated.
\end{prpt}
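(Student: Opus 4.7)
My plan is to derive Hausdorffness of $\oH_i^{\CS}(G;V)$ from the Banach--Schauder open mapping theorem, exploiting the fact that Schwartz homology is computed from a complex of Fr\'echet spaces.

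First I would recall from \cite{CS} that $\oH_i^{\CS}(G;V)$ arises as the homology of a chain complex $(C_\bullet, d_\bullet)$ of Fr\'echet spaces with continuous differentials, obtained by applying the coinvariants functor to a (bar-type) projective resolution of $V$ in $\CS\mathrm{mod}_G$. Concretely, $\oH_i^{\CS}(G;V)$ carries the subquotient topology $Z_i / B_i$, where $Z_i := \ker(d_i)$ is closed in the Fr\'echet space $C_i$ (hence itself Fr\'echet) and $B_i := \mathrm{im}(d_{i+1}) \subseteq Z_i$. Hausdorffness of $\oH_i^{\CS}(G;V)$ is therefore equivalent to $B_i$ being closed in $Z_i$.

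The key technical lemma is the following consequence of the open mapping theorem: if $T \colon E \to F$ is a continuous linear map between Fr\'echet spaces whose algebraic cokernel $F/\mathrm{im}(T)$ is finite dimensional, then $\mathrm{im}(T)$ is closed in $F$. To see this, choose a finite-dimensional algebraic complement $W$ of $\mathrm{im}(T)$ in $F$, so that $F = \mathrm{im}(T) \oplus W$, and form the continuous linear surjection
\[
\tilde T \colon E \oplus W \longrightarrow F, \qquad (e,w) \lto T(e) + w,
\]
between Fr\'echet spaces. The open mapping theorem forces $\tilde T$ to be a topological quotient map. Since $W \cap \mathrm{im}(T) = 0$, the saturated preimage $\tilde T^{-1}(\mathrm{im}(T)) = E \oplus 0$ is closed in $E \oplus W$, and hence $\mathrm{im}(T)$ is closed in $F$.

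Applying this lemma to $T = d_{i+1} \colon C_{i+1} \to Z_i$, whose algebraic cokernel is precisely the finite-dimensional space $\oH_i^{\CS}(G;V)$, we conclude that $B_i$ is closed in $Z_i$, and therefore $\oH_i^{\CS}(G;V)$ is Hausdorff for every $i$. The only point that requires attention is the preliminary unwinding of the definition of $\oH_i^{\CS}(G;V)$ as a quotient of a closed subspace of a Fr\'echet space with its natural quotient topology; once this is verified from the construction in \cite{CS}, the open mapping theorem does all the real work.
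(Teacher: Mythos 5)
Your proof is correct. The paper itself offers no proof of this proposition and simply refers the reader to \cite[Lemma 3.4]{BoW} and \cite[Proposition 6]{CW}; the argument you give --- reducing to the fact that a continuous linear map between Fr\'echet spaces with finite-dimensional algebraic cokernel has closed image, proved by augmenting the map by a finite-dimensional complement and applying the open mapping theorem to the resulting surjection --- is exactly the argument contained in those references. The one point you correctly flag as needing verification, namely that $\oH_i^{\CS}(G;V)$ is topologized as $Z_i/B_i$ for a complex $(C_\bullet,d_\bullet)$ of Fr\'echet spaces with continuous differentials, does hold in the framework of \cite{CS}: the Schwartz homology is computed from the coinvariants of a strong projective (bar-type) resolution in $\CS\mathrm{mod}_G$, each term of which is Fr\'echet, and the agreement with Blanc--Wigner's smooth homology noted in the Introduction is consistent with this description. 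So once that unwinding is done, your application of the open mapping theorem to $d_{i+1}\colon C_{i+1}\to Z_i$ closes the argument.
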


This result is known to experts, see for example \cite[Lemma 3.4]{BoW} and \cite[Proposition 6]{CW}.
By \cite[Theorem 7.7]{CS}, every finite dimensional representation (with moderate growth) is homologically finite. In this article, we show that a special class of representations, which are constructed as Schwartz sections of tempered $G$-vector bundles, are homologically finite, thus also homologically separated. 

Let $X$ be a $G$-Nash manifold, namely, a Nash manifold that carries a Nash action $G\times X\rightarrow X$. Let $\SE$ be a tempered $G$-vector bundle over $X$, namely, a tempered vector bundle over $X$ together with a tempered bundle action
$G\times \mathsf E\rightarrow \mathsf E$. Then the space of Schwartz sections $\Gamma^{\varsigma}(X,\mathsf E)$ of the tempered vector bundle $\SE$ over $X$ carries a natural action of $G$, which as a representation of $G$, lies in the category 
$\CS\mathrm{mod}_G$. We will recall the notions of tempered vector bundles and Schwartz sections in the next section.

For every $x\in X$, let $G_x$ denote its stabilizer in $G$, and let $\mathsf E_x$ denote the fibre of $\mathsf E$ at $x$, which is a representation in $\CS\mathrm{mod}_{G_x}$.
Write
\[
  \mathrm{N}_{x}:=\frac{\mathrm T_x (X)}{\mathrm T_x(G.x)}\otimes_{\mathbb R} \mathbb C\qquad (\mathrm T_x\textrm{ stands for the tangent space})
\]
for the complexified  normal space, and write
\[
\mathrm{N}_{x}^*:=\textrm{the dual space of $\mathrm{N}_{x}$},
\]
which is the complexified  conormal space. They are both representations in $\CS\mathrm{mod}_{G_x}$. 
Now we can state the main results of this article.

\begin{thm}\label{main}
Let $(X,\SE)$ be a tempered $G$-vector bundle such that $G$ acts on $X$ with finitely many orbits. Assume that for every $x\in X$, the following two conditions are satisfied:
\begin{itemize}
\item $\mathsf E_x\otimes\Sym^k(\mathrm{N}_{x}^*)\otimes\delta_{G/G_x}$ is homologically finite as a representation of $G_x$, for every integer $k\geq0$, 
\item for every $i\in\BZ$, $\oH_i^{\CS}(G_x;\SE_x\otimes\Sym^k(\mathrm{N}_{x}^*)\otimes\delta_{G/G_x})$ vanishes for sufficiently large $k$,
\end{itemize}
where $\Sym^k$ indicates the $k$th symmetric power. Then $\Gamma^{\varsigma}(X,\mathsf E)$ is homologically finite as a representation of $G$.
\end{thm}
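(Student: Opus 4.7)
The plan is to induct on the number of $G$-orbits in $X$, at each stage reducing to the stabilizer via Shapiro's lemma (Theorem~\ref{shapiro}) and using a jet expansion along a closed orbit to feed in the normal-bundle hypotheses. In the base case $X=G.x$ is a single orbit; the canonical $G$-equivariant Nash isomorphism $G/G_x\xrightarrow{\sim}X$ together with the standard description of Schwartz sections of a tempered $G$-vector bundle on a homogeneous space identifies $\Gamma^{\varsigma}(X,\SE)$ with the Schwartz induction $\ind_{G_x}^G\SE_x$, whereupon Shapiro's lemma gives
\[
\oH_i^{\CS}\bigl(G;\Gamma^{\varsigma}(X,\SE)\bigr)\cong \oH_i^{\CS}\bigl(G_x;\SE_x\otimes\delta_{G/G_x}\bigr),
\]
which is finite dimensional by the first hypothesis at $k=0$.

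For the inductive step, choose a closed orbit $Z=G.x$ in $X$ and put $U:=X\setminus Z$, a $G$-invariant open submanifold with strictly fewer orbits. Extension by zero and passage to formal jets along $Z$ yield a short exact sequence in $\CS\mathrm{mod}_G$
\[
0\to \Gamma^{\varsigma}(U,\SE|_U)\to \Gamma^{\varsigma}(X,\SE)\to J\to 0,
\]
where $J$ is the quotient by Schwartz sections vanishing to infinite order on $Z$. The induction hypothesis applied to $(U,\SE|_U)$ makes the left term homologically finite, so via the long exact sequence it suffices to prove the same for $J$. Now $J$ carries a decreasing $G$-equivariant filtration $J=J^{(0)}\supset J^{(1)}\supset\cdots$ by order of vanishing on $Z$, with trivial intersection and graded pieces
\[
J^{(k)}/J^{(k+1)}\cong \Gamma^{\varsigma}\bigl(Z,\SE|_Z\otimes \Sym^k(\RN^*)\bigr).
\]
Since $Z$ is a single orbit, the base-case computation together with Shapiro's lemma identifies $\oH_i^{\CS}$ of each graded piece with $\oH_i^{\CS}(G_x;\SE_x\otimes\Sym^k(\RN_x^*)\otimes\delta_{G/G_x})$, which by the two hypotheses is finite dimensional for each $k$ and zero for all $k\geq K$ (some $K\geq 0$).

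The long exact sequences attached to $0\to J^{(k+1)}\to J^{(k)}\to J^{(k)}/J^{(k+1)}\to 0$ for $k\geq K$ show that the inclusions $J^{(k+1)}\hookrightarrow J^{(k)}$ induce isomorphisms on Schwartz homology, so the tail $J^{(K)}$ is homologically trivial provided Schwartz homology is known to be compatible with the inverse limit $J^{(K)}\cong \varprojlim_{\ell}J^{(K)}/J^{(\ell)}$. This verified, the finite-length filtration of $J/J^{(K)}$ by finite-dimensional graded pieces yields the homological finiteness of $J$, and hence of $\Gamma^{\varsigma}(X,\SE)$. The main obstacle is precisely this compatibility with the inverse limit: one must check a Mittag--Leffler-type condition for the filtration $\{J^{(k)}\}$ in the \Fre category of Schwartz representations, and this is where the specific analytic features of Schwartz sections (rapid decay together with the tempered structure of the jet system) must be exploited.
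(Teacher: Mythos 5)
Your outline reproduces the paper's strategy step by step: induction on the number of orbits, the base case via Shapiro's lemma applied to $\Gamma^{\varsigma}(G.x,\SE)\cong\ind_{G_x}^{G}\SE_x$, removal of a closed orbit $Z$ with the resulting short exact sequence, and a decreasing jet filtration of $J=\Gamma_Z^{\varsigma}(X,\SE)$ whose graded pieces are identified (again by Shapiro) with $\Gamma^{\varsigma}\bigl(Z,\Sym^k(\RN^*_Z(X))\otimes\SE|_Z\bigr)$, hence with $\oH_\bullet^{\CS}\bigl(G_z;\SE_z\otimes\Sym^k(\RN_z^*)\otimes\delta_{G/G_z}\bigr)$ in homology.

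Two remarks. First, not every orbit is closed, so ``choose a closed orbit $Z$'' needs justification; the paper takes $Z$ of minimal dimension, which is closed by \cite[Proposition 3.6]{Su}, and you should make this choice explicit. Second, and more substantially, the inverse-limit step that you flag as ``the main obstacle'' is exactly the place where the paper invokes two preparatory results from \cite{CS}: the Borel Lemma (\cite[Propositions 8.2 and 8.3]{CS}), which supplies the topological and $G$-equivariant identification $J\cong\varprojlim_k J/J_k$ together with the graded-piece description, and \cite[Lemma 8.4]{CS}, a Mittag--Leffler type statement asserting that if $\oH_{i+1}^{\CS}(G;J/J_k)$ is finite dimensional for every $k$ then the natural map $\oH_i^{\CS}(G;J)\to\varprojlim_k\oH_i^{\CS}(G;J/J_k)$ is an isomorphism. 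Your intuition that analytic features of Schwartz sections must be exploited is correct only at the level of the identification $J\cong\varprojlim_k J/J_k$; the homological compatibility itself is an abstract Mittag--Leffler argument, and in your formulation the hypothesis is satisfied trivially because $\oH_\bullet^{\CS}(G;J^{(K)}/J^{(\ell)})$ vanishes for $\ell\geq K$. So the gap is closed by citing the appropriate lemmas rather than by any new analysis of the jet system. The paper's endgame is organized a little differently --- it first establishes that every $J/J_k$ is homologically finite, then applies \cite[Lemma 8.4]{CS}, and finally shows the inverse system $\{\oH_i^{\CS}(G;J/J_k)\}_k$ stabilizes for large $k$ --- but this is a cosmetic reshuffle of the same facts, not a different route.
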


Theorem \ref{main} has the following consequence for finite rank vector bundles.

\begin{thm}\label{main2}
Let $(X,\SE)$ be a tempered $G$-vector bundle such that $G$ acts on $X$ with finitely many orbits. Assume that all the fibres of $\SE$ are finite dimensional and for every $x\in X$, the trivial representation of $G_x$ does not occur as a subquotient of
\[
\SE_x\otimes\Sym^k(\mathrm{N}_{x}^*)\otimes\delta_{G/G_x},
\]
for sufficiently large $k$. Then $\Gamma^{\varsigma}(X,\mathsf E)$ is homologically finite as a representation of $G$.
\end{thm}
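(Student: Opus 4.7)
The plan is to deduce Theorem~\ref{main2} from Theorem~\ref{main} by verifying both of its hypotheses. Set $W_k := \SE_x \otimes \Sym^k(\mathrm{N}_x^*) \otimes \delta_{G/G_x}$. Since both $\SE_x$ and $\mathrm{N}_x^*$ are finite-dimensional, each $W_k$ is a finite-dimensional representation of $G_x$ of moderate growth. By \cite[Theorem 7.7]{CS}, every such representation is homologically finite, which gives the first hypothesis of Theorem~\ref{main}. The second hypothesis requires that, whenever $k$ is large enough that $W_k$ contains no trivial irreducible subquotient, all Schwartz homologies $\oH_i^{\CS}(G_x; W_k)$ vanish.

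I would isolate this last point as the sole substantive new input: the \emph{vanishing lemma} asserting that if $H$ is an almost linear Nash group and $W$ is a finite-dimensional representation in $\CS\mathrm{mod}_H$ admitting no trivial irreducible subquotient, then $\oH_i^{\CS}(H; W) = 0$ for every $i \in \BZ$.

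To prove this lemma I would first pick a Jordan--Hölder filtration of $W$ by $H$-subrepresentations whose graded pieces are irreducible and non-trivial. The long exact sequence in Schwartz homology attached to each short exact sequence of representations in $\CS\mathrm{mod}_H$ (a direct consequence of the $\delta$-functorial structure of $\oH_\bullet^{\CS}$ established in \cite{CS}) then reduces the claim to the case in which $W$ itself is irreducible and non-trivial. In that case I would identify Schwartz homology with the smooth homology of Blanc--Wigner via \cite[Theorem 1.8]{CS} and pass through a van Est comparison, so that the computation becomes one in relative Lie algebra homology. Splitting off the finite component group $H/H^\circ$ (which is exact in characteristic zero) and invoking the Levi decomposition of $H^\circ$ reduces to a reductive quotient, since Engel's theorem applied to the non-zero $H$-stable subspace of $U$-fixed vectors forces the unipotent radical $U$ of $H^\circ$ to act trivially on any irreducible finite-dimensional representation. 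A classical Casimir / Whitehead argument then kills the homology: a suitable Casimir element, supplemented by the central character to handle the non-semisimple direction, acts by a non-zero scalar on a non-trivial irreducible finite-dimensional representation, rendering the complex computing the Lie algebra homology acyclic.

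The main obstacle I anticipate is precisely this last step: carrying out the Casimir / Whitehead argument rigorously in the Schwartz framework, together with the bookkeeping needed to pass from $H$ to $H^\circ$ to its reductive quotient without losing the ``no trivial subquotient'' hypothesis on $W$ along the way (a trivial $H^\circ$-subquotient need not give a trivial $H$-subquotient, so the reduction has to be made with some care, typically by averaging over the finite quotient $H/H^\circ$). Once the vanishing lemma is in hand, Theorem~\ref{main} applies directly to $(X, \SE)$ and yields the homological finiteness of $\Gamma^{\varsigma}(X, \SE)$.
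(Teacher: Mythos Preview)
Your reduction to Theorem~\ref{main} is exactly what the paper does: both hypotheses are checked for the finite-dimensional representations $W_k=\SE_x\otimes\Sym^k(\mathrm N_x^*)\otimes\delta_{G/G_x}$, the first via \cite[Theorem 7.7]{CS}, the second via a vanishing lemma for finite-dimensional representations with no trivial subquotient.

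The only difference is that the paper does not reprove the vanishing lemma; it simply invokes \cite[Proposition 7.11 and Corollary 7.8]{CS}, which already record that if $W$ is a finite-dimensional object of $\CS\mathrm{mod}_H$ whose composition factors are all non-trivial, then $\oH_i^\CS(H;W)=0$ for every $i$. Your sketch (Jordan--H\"older reduction to the irreducible case, comparison with smooth/relative Lie algebra homology, splitting off $H/H^\circ$ and the unipotent radical, Casimir/central-character argument) is a reasonable outline of how such a proposition is established, and the obstacle you flag about the passage from $H$ to $H^\circ$ is genuine but handled exactly by the averaging you suggest; however, none of this work is needed here since the result is already in \cite{CS}. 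So your proof can be shortened to a direct citation, matching the paper's two-line argument.
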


By Proposition \ref{fis} and Theorem \ref{main2}, we know that the space of Schwartz sections $\Gamma^{\varsigma}(X,\mathsf E)$ of a tempered $G$-vector bundle $(X,\SE)$ is homologically separated, under some mild assumptions. Many important representations can be realized as Schwartz section space of a certain tempered $G$-vector bundle, see for example \cite[Example 1.17]{CS}. Thus we have shown that many representations are homologically separated.

This article is arranged as follows: In Section \ref{prel}, we recall the notions of tempered vector bundles, tempered bundle maps and Schwartz sections of tempered vector bundles. Then in the last section, we prove the main results of this article. The main tools to prove Theorems \ref{main} and \ref{main2} were already established by the second author and Sun in \cite{CS}.

{\bf Acknowledgements}
The authors would like to thank Binyong Sun for suggesting the definition of homologically finite representations.

\section{Tempered vector bundles and Schwartz sections}\label{prel}

In this section, we recall briefly the notions of tempered vector bundles and Schwartz sections.
For more details, we refer the reader to \cite[Sections 2 and 6]{CS}.

\subsection{Tempered vector bundles}

For basic knowledge on Nash manifolds and Nash maps, we refer the reader to \cite{Sh,Sh2}.
Let $X$ be a Nash manifold and $E_1$ and $E_2$ be two \Fre spaces. A map $\phi: X\times E_1\rightarrow E_2$ is called a liner family if the map $\phi(x, \,\cdot\,): E_1\rightarrow E_2$ is linear for all $x\in X$.

\begin{dfn}\label{defmgr}
Suppose that the Nash manifold $X$ is affine. A linear family $\phi: X\times E_1\rightarrow E_2$ is said to be of  moderate growth if for
every continuous seminorm $\abs{\,\cdot\,}_2$ on $E_2$, there is a positive Nash function $f$ on $X$ and  a continuous seminorm 
$\abs{\,\cdot\,}_1$ on $E_1$ such that
\[
  \abs{\phi(x,u)}_2\leq f(x) \abs{u}_1\quad \textrm{for all } x\in X, u\in E_1.
\]
\end{dfn}

Generally a linear family $\phi: X\times E_1\rightarrow E_2$ is said to be of  moderate growth if there is a finite covering  $\{X_i\}_{i=1}^k $ ($k\geq 0$) of $X$ by affine open Nash submanifolds such that
 $\phi|_{X_i\times E_1}$ is of moderate growth for all $1\leq i\leq k$.

\begin{dfn}\label{deftem}
Suppose that $X$ is affine. A linear family $\phi: X\times E_1\rightarrow E_2$ is said to be tempered if
\begin{itemize}
\item
it is smooth as a map of infinite-dimensional manifolds; and
\item
for every Nash differential operator $D$ on $X$, the linear family
\[
 D\phi: X\times E_1\rightarrow E_2
\]
 is of moderate growth.
 \end{itemize}
\end{dfn}

Generally a linear family $\phi: X\times E_1\rightarrow E_2$ is said to be tempered if there is a finite covering  $\{X_i\}_{i=1}^k $ ($k\geq 0$) of $X$ by affine open Nash submanifolds such that
 $\phi|_{X_i\times E_1}$ is tempered for all $1\leq i\leq k$.

\begin{dfn}\label{tembdm}
Let $X_1$, $X_2$ be Nash manifolds. A map $X_1\times E_1\rightarrow X_2\times E_2$ is called a tempered bundle map if it has the form
\[
  (x, u)\mapsto (f(x), \phi(x, u)),
\]
where $f: X_1\rightarrow X_2$ is a Nash map, and $\phi: X_1\times E_1\rightarrow E_2$ is a tempered linear family.
\end{dfn}

Let $X$ be a Nash manifold and let $\mathsf E$ be a \Fre bundle over $X$, namely, a topological vector bundle over $X$ such that all the fibres are  \Fre spaces. A local chart of $\mathsf E$ is defined to be a triple $(U,E, \phi)$, where $U$ is an open Nash submanifold of $X$, $E$ is a fibre of $\mathsf E$, and
\[
  \phi: U\times E\rightarrow \mathsf E|_U
 \]
is a topological isomorphism of vector bundles over $U$, where $\mathsf E|_U$ denotes the restriction of $\mathsf E$ to $U$.

\begin{dfn}\label{deftemst}
A tempered  structure on $\mathsf E$ is a subset $\CT_\mathsf E$ of the set of all local charts of $\mathsf E$ with the following properties:
\begin{itemize}
\item every two elements  $(U_1,E_1, \phi_1)$, $(U_2, E_2, \phi_2)$ in $\CT_\mathsf E$ are compatible in the sense that
the map
\[
 \phi^{-1}_2\circ \phi_1:  (U_1\cap U_2)\times E_1\rightarrow (U_1\cap U_2)\times E_2
\]
and its inverse are both tempered bundle maps;
\item  for every local chart of $\mathsf E$, if it is compatible with all elements of $\CT_\mathsf E$, then it belongs to $\CT_\mathsf E$;
\item  there exists a finite family $\{(U_i, E_i, \phi_i)\}_{i=1}^k$ ($k\geq 0$) of elements of $\CT_\mathsf E$ such that
     $\{U_i\}_{i=1}^k$ is a covering of $X$.
\end{itemize}
\end{dfn}

\begin{dfn}\label{deftembundle}
A tempered vector bundle is a triple $(X, \mathsf E, \CT_\mathsf E)$, where $X$ is a Nash manifold, $\mathsf E$ is a \Fre bundle over $X$ and $\CT_\mathsf E$ is a tempered structure on $\mathsf E$.
\end{dfn}

When $\CT_\mathsf E$ is understood, we simply call $\mathsf E$ a tempered vector bundle over $X$.
For every tempered vector bundle $(X, \mathsf E, \CT_\mathsf E)$ and  every Nash submanifold $Z$ of $X$,  $\mathsf E|_Z$ is obviously a tempered vector bundle over $Z$.

\begin{dfn}\label{temmor}
Let $(X_1, \mathsf E_1, \CT_{\mathsf E_1})$ and $(X_2, \mathsf E_2, \CT_{\mathsf E_2})$ be two tempered vector bundles.
A map $f: \mathsf E_1\rightarrow \mathsf E_2$ is called a tempered bundle map if there is a Nash map $f_0: X_1\rightarrow X_2$ such that the diagram
\[
 \begin{CD}
           \mathsf E_1@>f >> \mathsf E_2 \\
            @V VV           @VV V\\
           X_1@>f_0>> X_2\\
  \end{CD}
\]
commutes, and for every $(U_1, E_1, \phi_1)\in \CT_{\mathsf E_1}$ and $(U_2, E_2, \phi_2)\in \CT_{\mathsf E_2}$ with $f_0(U_1)\subset U_2$, the map
\[
\phi_2^{-1} \circ  f\circ \phi_1: U_1\times E_1\rightarrow U_2\times E_2
\]
is a tempered bundle map in the sense of Definition \ref{tembdm}.
\end{dfn}

\subsection{Schwartz sections}

For the definition of Schwartz functions on Nash manifolds, we refer the reader to \cite{AG1}.
Let $(X, \mathsf E, \CT_{\mathsf E})$ be a tempered vector bundle. Suppose that  $\{(U_i, E_i, \phi_i)\}_{i=1}^k$ ($k\geq 0$) are elements of $\CT_{\mathsf E}$ such that $\{U_i\}_{i=1}^{k}$ is a covering of $X$. Write $\Gamma^{\varsigma}(U_{i},U_{i}\times E_{i})$ for the space of the sections which correspond to Schwartz functions in
$\CS(U_i, E_i)$. This is obviously a \Fre space.
Define
\[
  \Gamma^{\varsigma}(U_{i},\mathsf E|_{U_i}):=\{\phi_{i}\circ s\, :\,  s\in\Gamma^{\varsigma}(U_{i},U_{i}\times E_{i})\},
  \]
  which is also obviously  a \Fre space.

 Denote by
$\Gamma(X, \mathsf E)$ the space of continuous sections of the bundle $\mathsf E$ over $X$. Then extension by zero gives a continuous linear map
\[
\bigoplus_{i=1}^k\Gamma^{\varsigma}(U_{i},\mathsf E|_{U_i})\rightarrow\Gamma(X,\mathsf E).
\]

\begin{dfn}
With the notation as above, the Schwartz sections $\Gamma^{\varsigma}(X, \mathsf E)$ of the tempered vector bundle $\mathsf E$ over the Nash manifold $X$ is defined to be the image of the above map, equipped with the quotient topology of the domain.
\end{dfn}

The definition of  $\Gamma^{\varsigma}(X, \mathsf E)$ (as a topological vector space) does not depend on the choice of the local charts $\{(U_i, E_i, \phi_i)\}_{i=1}^k$, see \cite[Proposition 6.2]{CS}.

Let $X$ be a $G$-Nash manifold, namely, a Nash manifold carrying a Nash action $G\times X\rightarrow X$. By a tempered $G$-vector bundle over $X$, we mean a tempered vector bundle
$\mathsf E$ over $X$, together with an action $G\times \mathsf E\rightarrow \mathsf E$ which is a tempered bundle map. Here $G\times \mathsf E$ is obviously viewed as a tempered vector bundle over $G\times X$.

\begin{prpd}[\cite{CS}, Proposition 6.3]
Suppose that the Nash manifold $X$ carries a Nash $G$-action. Let $\mathsf E$ be a tempered $G$-vector bundle over $X$. Then for every $g\in G$ and $\phi\in \Gamma^{\varsigma}(X, \mathsf E)$,
\[
    \begin{array}{rcl}
g.\phi:  X&\rightarrow &\mathsf E,\\
 x&\mapsto & g.(\phi(g^{-1}.x))
 \end{array}
\]
is a section in $\Gamma^{\varsigma}(X, \mathsf E)$. Moreover,  the \Fre space $\Gamma^{\varsigma}(X, \mathsf E)$  is a representation in $\CS\mathrm{mod}_G$ under the action
\[
  (g, \phi)\mapsto g.\phi.
\]
\end{prpd}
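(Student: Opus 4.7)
The plan is to verify the two assertions in turn: first that the prescribed formula $(g.\phi)(x) = g.(\phi(g^{-1}.x))$ produces an element of $\Gamma^{\varsigma}(X,\mathsf E)$ for each fixed $g \in G$ and $\phi \in \Gamma^{\varsigma}(X,\mathsf E)$, and second that the resulting $G$-action is smooth and of moderate growth.

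For the first assertion I would reduce everything to a local computation. By definition $\phi$ is the image under extension by zero of some sum $\sum_{i=1}^{k} s_i$, where each $s_i$ lies in $\Gamma^{\varsigma}(U_i,\mathsf E|_{U_i})$ for a finite family of charts in $\CT_{\mathsf E}$ whose bases cover $X$. Fix $g \in G$. Since $G$ acts on $X$ by Nash diffeomorphisms, $\{g.U_i\}$ is again a Nash open cover, and one may pick a finite refinement by charts in $\CT_{\mathsf E}$ subordinate to it. The hypothesis that $G \times \mathsf E \to \mathsf E$ is a tempered bundle map (Definitions \ref{tembdm} and \ref{temmor}) says exactly that when $g.s_i$ is read off in such a chart, it takes the form of the composition of $s_i$ with a Nash diffeomorphism on the base and a tempered linear family on fibres. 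Since pullback by Nash diffeomorphisms preserves Schwartz functions (see \cite{AG1}) and tempered linear families carry Schwartz inputs to Schwartz outputs (by the smoothness and moderate growth clauses of Definition \ref{deftem}), it follows that $g.\phi$ is again a Schwartz section.

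For the second assertion, continuity of $\phi \mapsto g.\phi$ at each fixed $g$ is an inspection of the same local computation. Smoothness of $g \mapsto g.\phi$ as a map $G \to \Gamma^{\varsigma}(X,\mathsf E)$ follows from the smoothness clause in Definition \ref{deftem}: one differentiates in $g$ using left-invariant Nash vector fields, and the resulting derivatives remain Schwartz by the Nash differential operator clause, because such derivatives of the action map are themselves tempered linear families. For the moderate growth condition, given a continuous seminorm $|\cdot|_\mu$ on $\Gamma^{\varsigma}(X,\mathsf E)$, one pulls it back through the local charts into a finite sum of Schwartz seminorms involving Nash differential operators applied to the $s_i$. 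Applying the moderate growth clause of Definition \ref{defmgr} to the tempered linear family defining the action on each affine piece of $G \times U_i$ yields a bound by a positive Nash function on $G \times X$ times another seminorm of $\phi$; absorbing the $x$-dependence into a Schwartz seminorm of $\phi$ leaves a positive Nash function of $g$ alone, as required.

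The main obstacle is the moderate growth estimate in the second step: one must cleanly separate the $G$-dependence from the $X$-dependence in the bounds produced by Definition \ref{defmgr} applied to the action viewed as a tempered bundle map, and control the commutation of $g$-translation with Nash differential operators on $X$. Both ingredients are encoded in Definition \ref{deftem}, but extracting the estimate in precisely the form required by the category $\CS\mathrm{mod}_G$ demands a careful unwinding in local charts.
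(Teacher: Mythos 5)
The paper states this proposition without proof; it is quoted directly from \cite[Proposition 6.3]{CS}, so there is no in-text argument for your sketch to be compared against. On its own merits the outline is sound and isolates the right mechanism: the requirement that $G\times\mathsf E\to\mathsf E$ be a tempered bundle map over $G\times X$ (and not merely fibrewise tempered for each fixed $g$) simultaneously delivers membership of $g.\phi$ in $\Gamma^\varsigma(X,\mathsf E)$, smoothness of $g\mapsto g.\phi$, and the moderate-growth estimate, and reducing to local charts through the extension-by-zero decomposition $\phi=\sum_i s_i$ is the right reduction. Two points are left implicit that a full write-up would need to spell out. First, ``absorbing the $x$-dependence'' requires that a positive Nash function on $G\times X$ be dominated by a product $f_1(g)f_2(x)$ of positive Nash functions on the factors; this holds because Nash functions on affine Nash manifolds are polynomially bounded in an ambient embedding, but it is not a formal consequence of Definition \ref{defmgr} and should be stated. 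Second, well-definedness of $g.\phi$ as an element of the quotient space $\Gamma^\varsigma(X,\mathsf E)$ --- independence of the chosen atlas and of the particular decomposition $\phi=\sum_i s_i$ --- rests on the chart-independence result \cite[Proposition 6.2]{CS}, which you invoke only tacitly when you pass to a refinement subordinate to $\{g.U_i\}$. With these two items made explicit the sketch is a plausible reconstruction of the argument in \cite{CS}.
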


\section{Proof of the main results}

We prove Theorem \ref{main} by induction on the number of $G$-orbits in $X$.
Firstly, assume that $G$ acts on $X$ transitively. By \cite[Proposition 6.7]{CS}, 
$$\Gamma^{\varsigma}(X,\SE)\cong\ind_{G_x}^{G}\SE_x$$ 
as representations of $G$, for any $x\in X$. For every $i\in\BZ$, by Theorem \ref{shapiro},
\[
\oH_{i}^{\CS}(G;\Gamma^{\varsigma}(X,\SE))\cong\oH_{i}^{\CS}(G;\ind_{G_x}^{G}\SE_x)\cong\oH_{i}^{\CS}(G_x;\SE_x\otimes\delta_{G/G_x}),
\]
which is finite dimensional by the first assumption in Theorem \ref{main}. Thus $\Gamma^{\varsigma}(X,\SE)$ is homologically finite in this case.

Now assume that the number $n$ of $G$-orbits in $X$ is larger than 1 and Theorem \ref{main} holds for any tempered $G$-vector bundle $(X^\prime,\SE^\prime)$ (satisfying the conditions in Theorem \ref{main}) with the number of $G$-orbits in $X^\prime$ less than $n$. Let $Z$ be a $G$-orbit in $X$ with minimal dimension among all the orbits. Then by \cite[Proposition 3.6]{Su}, $Z$ must be closed in $X$. 
Denote by $U:=X\setminus Z$, which is a $G$-invariant open Nash submanifold of $X$. Clearly, $(U,\SE|_U)$ is a tempered $G$-vector bundle satisfying the conditions in Theorem \ref{main} and the number of $G$-orbits in $U$ is less than $n$. By the induction assumption, $\Gamma^{\varsigma}(U,\SE|_U)$ is homologically finite. By \cite[Theorem 5.4.1]{AG1}, the extension by zero yields a closed linear embedding
\[
\Gamma^{\varsigma}(U,\SE|_U)\hookrightarrow\Gamma^{\varsigma}(X,\SE),
\]
and we identify $\Gamma^{\varsigma}(U,\SE|_U)$ with its image in $\Gamma^{\varsigma}(X,\SE)$. Define
\[
\Gamma_{Z}^{\varsigma}(X, \mathsf E):=\Gamma^{\varsigma}(X, \mathsf E)/\Gamma^{\varsigma}(U, \mathsf E|_U).
\]

\begin{lem}\label{shor}
Let $0\rightarrow V_1\rightarrow V_2 \rightarrow V_3\rightarrow 0$ be a short exact sequence in the category $\CS\mathrm{mod}_G$.
If any two of $V_1, V_2$ and $V_3$ are homologically finite, then so is the third one.
\end{lem}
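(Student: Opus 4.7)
The plan is to reduce the statement to an easy linear algebra fact via the long exact sequence in Schwartz homology. Since $0\to V_1\to V_2\to V_3\to 0$ is a short exact sequence in $\CS\mathrm{mod}_G$ (that is, a strict short exact sequence of smooth \Fre $G$-representations of moderate growth), the homological framework of Chen--Sun \cite{CS}, in which $\oH_\bullet^{\CS}(G;-)$ is set up as a derived functor of the Schwartz coinvariants, produces the associated long exact sequence
\[
\cdots\to \oH_{i+1}^{\CS}(G;V_3)\to \oH_{i}^{\CS}(G;V_1)\to \oH_{i}^{\CS}(G;V_2)\to \oH_{i}^{\CS}(G;V_3)\to \oH_{i-1}^{\CS}(G;V_1)\to\cdots
\]
of (abstract) vector spaces, for $i\in\BZ$. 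I would simply invoke this as an input.

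Granting the long exact sequence, the conclusion is immediate from the elementary observation that if $A\to B\to C$ is an exact sequence of vector spaces with $\dim A,\dim C<\infty$, then $\dim B<\infty$ as well, since $B$ is an extension of the (finite-dimensional) image in $C$ by the (finite-dimensional) image of $A$. I would then distribute this across the three cases: if $V_1,V_2$ are homologically finite, each $\oH_i^{\CS}(G;V_3)$ sits between $\oH_i^{\CS}(G;V_2)$ and $\oH_{i-1}^{\CS}(G;V_1)$; if $V_1,V_3$ are homologically finite, each $\oH_i^{\CS}(G;V_2)$ sits between $\oH_i^{\CS}(G;V_1)$ and $\oH_i^{\CS}(G;V_3)$; and if $V_2,V_3$ are homologically finite, each $\oH_i^{\CS}(G;V_1)$ sits between $\oH_{i+1}^{\CS}(G;V_3)$ and $\oH_i^{\CS}(G;V_2)$. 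In every case the two flanking spaces are finite-dimensional, hence so is the middle.

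The only genuine content is thus the existence of the long exact sequence in Schwartz homology for a strict short exact sequence in $\CS\mathrm{mod}_G$, which is the expected behaviour of a derived functor and is available from \cite{CS}. I would expect this to be the main (but minor) obstacle: one needs to confirm that the connecting homomorphisms are defined at the level of vector spaces without any Hausdorffness hypothesis on the intermediate $\oH_i^{\CS}$'s, so that the purely algebraic dimension count above is valid. No quantitative control on the topologies is required, because homological finiteness is only a statement about the underlying vector space dimensions.
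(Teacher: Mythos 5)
Your proof is correct and takes essentially the same route as the paper: the paper invokes \cite[Corollary 7.8]{CS} for the long exact sequence in Schwartz homology induced by the short exact sequence and then concludes by a dimension count, exactly as you do. The only difference is that you spell out the elementary linear-algebra step and the three cases, which the paper leaves implicit.
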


\begin{proof}
By \cite[Corollary 7.8]{CS}, the short exact sequence $0\rightarrow V_1\rightarrow V_2 \rightarrow V_3\rightarrow 0$ yields a long exact sequence
\[
\cdots \rightarrow\oH^{\CS}_{i+1}(G; V_{3})\rightarrow\oH^{\CS}_{i}(G; V_{1})\rightarrow\oH^{\CS}_{i}(G; V_{2})\rightarrow \oH^{\CS}_{i}(G; V_{3})\rightarrow\cdots
\]
of Schwartz homologies. Now the lemma follows directly from the definition of homological finiteness.
\end{proof}

From the short exact sequence
\[
0\rightarrow \Gamma^{\varsigma}(U,\SE|_U)\rightarrow \Gamma^{\varsigma}(X,\SE) \rightarrow \Gamma_{Z}^{\varsigma}(X, \mathsf E)\rightarrow 0
\]
and Lemma \ref{shor}, to complete the proof it is enough to show that $\Gamma_{Z}^{\varsigma}(X, \mathsf E)$ is homologically finite.
For this quotient space, we have the following characterization.

\begin{lem}[Borel's Lemma]\label{borel}
$\Gamma_{Z}^{\varsigma}(X, \mathsf E)$ has a natural countable decreasing filtration $\{\Gamma_{Z}^{\varsigma}(X, \mathsf E)_k\}_{k\geq0}$ by closed subspaces such that
\begin{itemize}
\item[(a)] The natural map  $$\Gamma^{\varsigma}_Z(X, \mathsf E)\rightarrow \varprojlim_k \, \Gamma_Z^{\varsigma}(X, \mathsf E)/\Gamma_{Z}^{\varsigma}(X, \mathsf E)_k$$
is a topological linear  isomorphism.
\item[(b)] For all integers $k\geq0$, there is naturally a topological linear isomorphism
\[
  \Gamma_{Z}^{\varsigma}(X, \mathsf E)_k/\Gamma_{Z}^{\varsigma}(X, \mathsf E)_{k+1}\cong \Gamma^\varsigma(Z, \Sym^k(\mathrm N^*_Z(X))\otimes \mathsf E|_Z),
\]
where $\Gamma_{Z}^{\varsigma}(X, \mathsf E)_0=\Gamma_{Z}^{\varsigma}(X, \mathsf E)$.
\end{itemize}
\end{lem}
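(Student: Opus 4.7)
The plan is to define the filtration via order of vanishing along $Z$ and then establish both parts by reducing to the local model of a tempered vector bundle over an open neighborhood of the zero section in the normal bundle $\mathrm{N}_Z X$. Write $\CI_Z \subset \CS(X)$ for the ideal of Schwartz functions vanishing on $Z$, and set $\Gamma^{\varsigma}(X,\SE)_k$ to be the closed subspace of Schwartz sections whose jet along $Z$ vanishes to order $\geq k$ (the closure of $\CI_Z^k \cdot \Gamma^{\varsigma}(X,\SE)$). Since extension by zero embeds $\Gamma^{\varsigma}(U,\SE|_U)$ into $\Gamma^{\varsigma}(X,\SE)_k$ for every $k$ (a section Schwartz on $U$ has trivial Taylor jets along $Z$), the image $\Gamma^{\varsigma}_Z(X,\SE)_k$ in the quotient $\Gamma^{\varsigma}_Z(X,\SE)$ is well-defined and closed, and gives a decreasing filtration with $\Gamma^{\varsigma}_Z(X,\SE)_0 = \Gamma^{\varsigma}_Z(X,\SE)$.

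Next, since $Z$ is a closed Nash submanifold of $X$, Shiota's tubular neighborhood theorem furnishes a Nash open neighborhood $W$ of $Z$ in $X$ together with a Nash diffeomorphism $W \cong V$ onto an open Nash neighborhood $V$ of the zero section of the normal bundle $\mathrm N_Z X\to Z$, identifying $Z$ with the zero section. Using a Nash partition of unity subordinate to the cover $\{W,U\}$, any Schwartz section of $\SE$ decomposes into one supported in $W$ and one supported in $U$, the latter mapping to zero in $\Gamma^{\varsigma}_Z(X,\SE)$. Pulling back the tempered structure of $\SE|_W$ through the diffeomorphism and, after refining, trivializing $\SE|_Z$ through tempered local charts, we reduce both (a) and (b) to the local model of a trivial tempered vector bundle $Z\times E$ over $V\subset \mathrm N_Z X$.

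For part (b), the $k$th-order Taylor expansion in the fibre direction of $\mathrm N_Z X$ yields a continuous surjection
\[
\Gamma^{\varsigma}(X,\SE)_k \twoheadrightarrow \Gamma^{\varsigma}(Z, \Sym^k(\mathrm N_Z^*(X)) \otimes \SE|_Z)
\]
with kernel $\Gamma^{\varsigma}(X,\SE)_{k+1}$; this is the Schwartz/tempered refinement of the algebraic identification $\CI_Z^k/\CI_Z^{k+1} \cong \Sym^k(\mathrm N_Z^*(X))$, and reduces, via the local model above, to the parametric Borel lemma for Schwartz functions established in \cite[\S5]{AG1}. For part (a), injectivity of the natural map into the inverse limit amounts to the assertion that a Schwartz section all of whose Taylor jets along $Z$ vanish extends by zero to a Schwartz section of $\SE|_U$; this is the Schwartz-flatness theorem of \cite[Theorem 5.4.3]{AG1} applied chart by chart. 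Surjectivity is the classical Borel-type statement: any sequence of Schwartz jets $\alpha_k\in \Gamma^{\varsigma}(Z, \Sym^k(\mathrm N_Z^*(X))\otimes \SE|_Z)$ is realised as the Taylor series along $Z$ of some Schwartz section, produced by a Borel summation in which each term is multiplied by a tempered bump function in the fibre direction, rescaled so that the sum converges in the Schwartz topology on $V$. The topological-isomorphism assertion then follows from the open mapping theorem applied to the \Fre{}space $\Gamma^{\varsigma}_Z(X,\SE)$ and its inverse-limit target.

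The main obstacle is the Borel summation step in (a): one has to choose the rescaling of the tempered fibre cutoffs uniformly along the base $Z$ so that the assembled series lies in $\Gamma^{\varsigma}(V, Z\times E)$ and, moreover, so that every Nash-derivative seminorm of the tail is controlled by a prescribed continuous seminorm of the $\alpha_k$. Balancing these estimates against each other while summing infinitely many jets is the technical heart of the argument; once this is carried out in the local model, everything else reduces either to the classical Borel lemma on affine space or to the Schwartz function results of \cite{AG1}.
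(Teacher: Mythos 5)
The paper does not prove Lemma \ref{borel} itself; it simply cites \cite[Propositions 8.2 and 8.3]{CS}, so there is no in-text argument to compare against. Your sketch reconstructs the standard and, to my knowledge, correct strategy behind those propositions: define the filtration by order of vanishing along $Z$, pass to a Nash tubular neighborhood of $Z$ (identified with a neighborhood of the zero section in $\mathrm N_Z X$), take fibrewise Taylor expansion to identify the successive quotients with Schwartz sections of $\Sym^k(\mathrm N_Z^*(X))\otimes \SE|_Z$, and establish part (a) by the characterization of $\Gamma^\varsigma(U,\SE|_U)$ as the flat sections (injectivity) together with a Borel-summation with tempered fibre cutoffs (surjectivity), closing with the open mapping theorem for the \Fre{}inverse limit. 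You also correctly flag the uniform-in-base rescaling in the Borel summation as the technical crux. One small point worth tightening if you were to write this out in full: your two descriptions of $\Gamma^\varsigma(X,\SE)_k$ --- ``jets along $Z$ vanish to order $\geq k$'' and ``closure of $\CI_Z^k\cdot\Gamma^\varsigma(X,\SE)$'' --- are not tautologically the same; the first is the definition you want (it is manifestly closed and $G$-invariant, and makes part (b) a genuine computation of the associated graded), while their coincidence is itself a consequence of the local Borel lemma and should be stated as such rather than folded into the definition.
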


Here 
\[
  \mathrm{N}_{Z}(X):=\bigsqcup_{x\in Z} \frac{ \mathrm T_x(X)}{\mathrm T_x(Z)}\otimes_\BR \BC
\]
denotes the complexified normal bundle of $Z$ in $X$ and $\mathrm{N}^*_{Z}(X)$ its dual bundle, which is called the complexified conormal bundle. In our situation, the closed subspaces $\Gamma_{Z}^{\varsigma}(X, \mathsf E)_k$ are naturally $G$-invariant and the isomorphisms in Lemma \ref{borel} are $G$-equivariant. For a proof of Lemma \ref{borel}, see for example \cite[Propositions 8.2 and 8.3]{CS}.

\begin{lem}\label{factor}
$\Gamma_Z^{\varsigma}(X, \mathsf E)/\Gamma_{Z}^{\varsigma}(X, \mathsf E)_k$ is homologically finite for every integer $k\geq 0$.
\end{lem}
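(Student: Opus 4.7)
The plan is to argue by induction on $k$. For $k=0$ the quotient $\Gamma_Z^{\varsigma}(X,\mathsf E)/\Gamma_Z^{\varsigma}(X,\mathsf E)_0$ is zero (by the convention $\Gamma_{Z}^{\varsigma}(X,\mathsf E)_0=\Gamma_{Z}^{\varsigma}(X,\mathsf E)$ in Lemma \ref{borel}), hence trivially homologically finite. For the inductive step, assume the statement holds for $k-1$ and consider the short exact sequence
\[
0\rightarrow \Gamma_Z^{\varsigma}(X,\mathsf E)_{k-1}/\Gamma_{Z}^{\varsigma}(X,\mathsf E)_{k}\rightarrow \Gamma_Z^{\varsigma}(X,\mathsf E)/\Gamma_{Z}^{\varsigma}(X,\mathsf E)_{k}\rightarrow \Gamma_Z^{\varsigma}(X,\mathsf E)/\Gamma_{Z}^{\varsigma}(X,\mathsf E)_{k-1}\rightarrow 0
\]
in $\CS\mathrm{mod}_G$, which is well defined because the filtration is $G$-invariant and the subspaces are closed. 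By the inductive hypothesis, the rightmost term is homologically finite, so by Lemma \ref{shor} it suffices to show that the leftmost term is homologically finite.

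By the $G$-equivariant isomorphism in part (b) of Lemma \ref{borel}, the leftmost term is isomorphic to $\Gamma^\varsigma(Z,\Sym^{k-1}(\mathrm N^*_Z(X))\otimes \mathsf E|_Z)$. Since $Z$ is a single $G$-orbit, pick any $x\in Z$; the transitive case argument already carried out in the proof of Theorem \ref{main} (based on \cite[Proposition 6.7]{CS}) gives
\[
\Gamma^\varsigma(Z,\Sym^{k-1}(\mathrm N^*_Z(X))\otimes \mathsf E|_Z)\cong \ind_{G_x}^{G}\bigl(\mathsf E_x\otimes\Sym^{k-1}(\mathrm N_x^*)\bigr)
\]
as representations of $G$. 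Shapiro's lemma (Theorem \ref{shapiro}) then identifies
\[
\oH_i^{\CS}\bigl(G;\Gamma^\varsigma(Z,\Sym^{k-1}(\mathrm N^*_Z(X))\otimes \mathsf E|_Z)\bigr)\cong \oH_i^{\CS}\bigl(G_x;\mathsf E_x\otimes\Sym^{k-1}(\mathrm N_x^*)\otimes\delta_{G/G_x}\bigr),
\]
and the first hypothesis of Theorem \ref{main} says exactly that the right-hand side is finite dimensional for every $i\in\BZ$. This proves the leftmost term is homologically finite and completes the induction.

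The only potentially delicate step is the identification of the successive quotient with an induced representation in the $G$-equivariant sense; but this is packaged in the transitive case already handled at the beginning of the proof of Theorem \ref{main}, and the $G$-equivariance of the Borel filtration isomorphism is already noted after Lemma \ref{borel}. Note that the second hypothesis of Theorem \ref{main} (vanishing of Schwartz homology in large symmetric powers) is not needed here; it will only enter when one passes from the finite quotients $\Gamma_Z^{\varsigma}(X,\mathsf E)/\Gamma_{Z}^{\varsigma}(X,\mathsf E)_k$ to $\Gamma_Z^{\varsigma}(X,\mathsf E)$ itself via the inverse limit in Lemma \ref{borel}(a).
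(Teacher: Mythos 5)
Your proof is correct and follows essentially the same route as the paper: both use Lemma \ref{borel}(b) to identify the successive quotients with Schwartz sections over $Z$, then Proposition 6.7 of \cite{CS} and Shapiro's lemma together with the first hypothesis of Theorem \ref{main} to conclude each such quotient is homologically finite, and finally Lemma \ref{shor} to assemble the finite pieces. The only difference is cosmetic: the paper states that every successive quotient is homologically finite and then invokes Lemma \ref{shor}, leaving the induction on $k$ implicit, whereas you spell out that induction; your observation that the second hypothesis of Theorem \ref{main} is not needed at this stage is also correct.
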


\begin{proof}
By \cite[Proposition 6.7]{CS} and Theorem \ref{shapiro},
\[
 \begin{array}{rcl}
\oH_i^{\CS}(G;\Gamma^{\varsigma}(Z,\Sym^j(\mathrm{N}^*_{Z}(X))\otimes\SE|_Z))&\cong&\oH_i^{\CS}(G;\ind_{G_z}^G(\Sym^j(\mathrm{N}^*_{z})\otimes\SE_z))\\
 &\cong& \oH_{i}^{\CS}(G_z;\Sym^j(\mathrm{N}^*_{z})\otimes\SE_z\otimes\delta_{G/G_z}),
 \end{array}
\]
which by the first assumption of Theorem \ref{main}, is finite dimensional for every $i\in\BZ$ and $j\geq0$, here $z\in Z$. Thus 
$\Gamma^{\varsigma}(Z,\Sym^j(\mathrm{N}^*_{Z}(X))\otimes\SE|_Z)$ is homologically finite, for every $j\geq0$.
By Lemma \ref{borel}, we conclude that $\Gamma_Z^{\varsigma}(X, \mathsf E)_j/\Gamma_{Z}^{\varsigma}(X, \mathsf E)_{j+1}$
is homologically finite, for every $j\geq0$. Now the lemma follows from Lemma \ref{shor}.
\end{proof}

\begin{lem}\label{comu}
The natural map
\[
\oH_{i}^\CS(G;\Gamma_{Z}^{\varsigma}(X, \mathsf E))\rightarrow\varprojlim_{k}\oH_{i}^\CS(G;\Gamma_{Z}^{\varsigma}(X, \mathsf E)/\Gamma_{Z}^{\varsigma}(X, \mathsf E)_k)
\]
is a linear isomorphism, for every $i\in\BZ$.
\end{lem}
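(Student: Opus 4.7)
Write $V := \Gamma_Z^{\varsigma}(X,\SE)$, $V_k := \Gamma_Z^{\varsigma}(X,\SE)_k$, $Q_k := V/V_k$, and $\gr_k := V_k/V_{k+1}$. The plan is to fix $i\in\BZ$, choose a sufficiently large integer $K=K_i$, and exhibit both sides of the asserted identification as $\oH_i^{\CS}(G;Q_K)$.

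First, repeating the calculation from the proof of Lemma \ref{factor} (Lemma \ref{borel}(b) combined with \cite[Proposition 6.7]{CS} and Shapiro's Lemma) yields, for any $z\in Z$,
\[
\oH_j^{\CS}(G;\gr_k)\cong \oH_j^{\CS}\bigl(G_z;\Sym^k(\mathrm{N}_z^*)\otimes\SE_z\otimes\delta_{G/G_z}\bigr).
\]
By the second hypothesis of Theorem \ref{main}, these vanish for each fixed $j$ once $k$ is large enough, so one may choose $K$ such that $\oH_j^{\CS}(G;\gr_k)=0$ for all $j\in\{i-1,i\}$ and all $k\geq K$. Feeding the short exact sequences $0\to \gr_k\to Q_{k+1}\to Q_k\to 0$ into the Schwartz long exact sequence \cite[Corollary 7.8]{CS} then shows that the transition maps $\oH_i^{\CS}(G;Q_{k+1})\to \oH_i^{\CS}(G;Q_k)$ are isomorphisms for $k\geq K$, so the right-hand inverse system in the lemma is eventually constant and its limit is canonically $\oH_i^{\CS}(G;Q_K)$.

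It remains to identify the left-hand side $\oH_i^{\CS}(G;V)$ with the same group. From the short exact sequence $0\to V_K\to V\to Q_K\to 0$ and another use of \cite[Corollary 7.8]{CS}, this reduces to proving $\oH_j^{\CS}(G;V_K)=0$ for $j\in\{i-1,i\}$. A straightforward induction on $n\geq 0$ along the short exact sequences
\[
0\to \gr_{K+n}\to V_K/V_{K+n+1}\to V_K/V_{K+n}\to 0,
\]
starting from $V_K/V_K=0$, gives $\oH_j^{\CS}(G;V_K/V_{K+n})=0$ for every $n$ and every $j\in\{i-1,i\}$. Applying Lemma \ref{borel}(a) to the sub-tower $\{V_{K+n}\}_{n\geq 0}$ identifies $V_K$ with the inverse limit $\varprojlim_n V_K/V_{K+n}$, a countable tower of surjections in $\CS\mathrm{mod}_G$.

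The main obstacle is the final step: propagating the per-level vanishing to $V_K$ itself, i.e.\ showing that Schwartz homology commutes with this particular countable inverse limit of surjective maps. The plan is to establish this commutation by adapting the techniques of \cite[Section 8]{CS} used to analyze Schwartz sections along filtrations: the tower is Mittag--Leffler since all transitions are surjective, so $\varprojlim^{1}$ vanishes at the Fr\'echet-space level, and a bar-type projective resolution of $\BC$ in Schwartz $G$-modules can be arranged so that tensoring with the tower commutes with the limit. Granted this, $\oH_j^{\CS}(G;V_K)=\varprojlim_n \oH_j^{\CS}(G;V_K/V_{K+n})=0$ for $j\in\{i-1,i\}$, and the long exact sequence above then upgrades $\oH_i^{\CS}(G;V)\to \oH_i^{\CS}(G;Q_K)$ to an isomorphism, which combined with the stabilization from the second step yields the lemma.
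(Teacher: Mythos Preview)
Your argument has a genuine gap at exactly the point you flag as ``the main obstacle.'' You reduce the lemma to showing $\oH_j^{\CS}(G;V_K)=0$ for $j\in\{i-1,i\}$, and then propose to deduce this from the per-level vanishing $\oH_j^{\CS}(G;V_K/V_{K+n})=0$ by ``adapting the techniques of \cite[Section 8]{CS}.'' But this step---passing Schwartz homology through the inverse limit $V_K=\varprojlim_n V_K/V_{K+n}$---is precisely the statement of the lemma you are trying to prove, applied to the shifted tower. No actual argument is given: the references to Mittag--Leffler and bar resolutions are suggestive, but producing a Milnor-type sequence for Schwartz homology of \Fre representations is a nontrivial piece of functional analysis, not a formality. (Note also that a Milnor sequence would introduce a $\varprojlim^1$ term in degree $j+1$, so your choice of $K$ controlling only degrees $i-1,i$ would need to be enlarged.)

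The paper handles this in two lines by citing \cite[Lemma 8.4]{CS}, which supplies exactly the commutation statement under the hypothesis that $\oH_{i+1}^{\CS}(G;Q_k)$ is finite dimensional for all $k$; that hypothesis is already available from Lemma \ref{factor}. In particular, the paper's proof of this lemma uses only the \emph{first} assumption of Theorem \ref{main} (homological finiteness of the fibre representations), whereas your route also invokes the second (eventual vanishing), which the paper defers to Lemma \ref{fine}. The cleanest fix is simply to invoke \cite[Lemma 8.4]{CS} directly; if you want to keep your stabilization argument for the right-hand side, that is fine, but it belongs to Lemma \ref{fine} rather than here.
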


\begin{proof}
By Lemma \ref{factor}, $\oH_{i+1}^\CS(G;\Gamma_{Z}^{\varsigma}(X, \mathsf E)/\Gamma_{Z}^{\varsigma}(X, \mathsf E)_k)$ is finite dimensional, for every $k\geq0$. Now the lemma follows directly from \cite[Lemma 8.4]{CS}.
\end{proof}

We need to show that $\oH_{i}^\CS(G;\Gamma_{Z}^{\varsigma}(X, \mathsf E))$ is finite dimensional for every $i\in\BZ$.

\begin{lem}\label{fine}
Let $i\in\BZ$, the natural map
\[
\oH_{i}^\CS(G;\Gamma_{Z}^{\varsigma}(X, \mathsf E)/\Gamma_{Z}^{\varsigma}(X, \mathsf E)_{k+1})\rightarrow\oH_{i}^\CS(G;\Gamma_{Z}^{\varsigma}(X, \mathsf E)/\Gamma_{Z}^{\varsigma}(X, \mathsf E)_k)
\]
induced from the surjection $$\Gamma_{Z}^{\varsigma}(X, \mathsf E)/\Gamma_{Z}^{\varsigma}(X, \mathsf E)_{k+1}\rightarrow\Gamma_{Z}^{\varsigma}(X, \mathsf E)/\Gamma_{Z}^{\varsigma}(X, \mathsf E)_{k}$$ is a linear isomorphism for sufficiently large $k$.
\end{lem}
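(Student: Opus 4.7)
The plan is to exploit the short exact sequence
\[
0\to \Gamma_{Z}^{\varsigma}(X, \mathsf E)_k/\Gamma_{Z}^{\varsigma}(X, \mathsf E)_{k+1}\to \Gamma_{Z}^{\varsigma}(X, \mathsf E)/\Gamma_{Z}^{\varsigma}(X, \mathsf E)_{k+1}\to \Gamma_{Z}^{\varsigma}(X, \mathsf E)/\Gamma_{Z}^{\varsigma}(X, \mathsf E)_{k}\to 0
\]
in $\CS\mathrm{mod}_G$, induced by the filtration of Lemma \ref{borel}, and to read off the desired isomorphism from the associated long exact sequence of Schwartz homology via \cite[Corollary 7.8]{CS}.

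Concretely, I would first apply \cite[Corollary 7.8]{CS} to the short exact sequence above to get the long exact sequence
\[
\cdots\to\oH_{i}^\CS(G;\Gamma_{Z}^{\varsigma}(X,\mathsf E)_k/\Gamma_{Z}^{\varsigma}(X,\mathsf E)_{k+1})\to\oH_{i}^\CS(G;\Gamma_{Z}^{\varsigma}(X,\mathsf E)/\Gamma_{Z}^{\varsigma}(X,\mathsf E)_{k+1})\to\oH_{i}^\CS(G;\Gamma_{Z}^{\varsigma}(X,\mathsf E)/\Gamma_{Z}^{\varsigma}(X,\mathsf E)_{k})\to\oH_{i-1}^\CS(G;\Gamma_{Z}^{\varsigma}(X,\mathsf E)_k/\Gamma_{Z}^{\varsigma}(X,\mathsf E)_{k+1})\to\cdots.
\]
Thus the map in the statement is a linear isomorphism as soon as $\oH_{i}^\CS$ and $\oH_{i-1}^\CS$ of the graded piece $\Gamma_{Z}^{\varsigma}(X,\mathsf E)_k/\Gamma_{Z}^{\varsigma}(X,\mathsf E)_{k+1}$ both vanish.

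Next I would identify these homologies using Lemma \ref{borel}(b), \cite[Proposition 6.7]{CS}, and Theorem \ref{shapiro}, exactly as was done in the proof of Lemma \ref{factor}: for any $z\in Z$ we have
\[
\oH_{i}^\CS(G;\Gamma_{Z}^{\varsigma}(X,\mathsf E)_k/\Gamma_{Z}^{\varsigma}(X,\mathsf E)_{k+1})\cong\oH_{i}^\CS(G_z;\SE_z\otimes\Sym^k(\mathrm N_z^*)\otimes\delta_{G/G_z}).
\]
Now the second hypothesis of Theorem \ref{main} applies: for the fixed degree $i$, there exists $k_0=k_0(i)$ such that both $\oH_{i}^\CS(G_z;\SE_z\otimes\Sym^k(\mathrm N_z^*)\otimes\delta_{G/G_z})$ and $\oH_{i-1}^\CS(G_z;\SE_z\otimes\Sym^k(\mathrm N_z^*)\otimes\delta_{G/G_z})$ vanish for all $k\geq k_0$. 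Feeding this back into the long exact sequence yields the desired isomorphism for $k\geq k_0$.

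The only real subtlety I expect is bookkeeping the finitely many $G$-orbits inside $Z$ (in our induction $Z$ is a single orbit, so only one base point $z$ is involved, and the identification with a Schwartz induction is clean), together with the mild point that the vanishing threshold $k_0$ depends on $i$; but the statement only asks for an eventual isomorphism at each fixed $i$, so this causes no trouble. No additional machinery beyond Lemma \ref{borel}, Theorem \ref{shapiro}, \cite[Proposition 6.7]{CS}, and \cite[Corollary 7.8]{CS} is needed.
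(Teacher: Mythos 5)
Your proposal is correct and matches the paper's own proof essentially step for step: the same short exact sequence from the filtration, the same long exact sequence via \cite[Corollary 7.8]{CS}, the same identification of the graded piece through Lemma \ref{borel}, \cite[Proposition 6.7]{CS} and Theorem \ref{shapiro}, and the same appeal to the second hypothesis of Theorem \ref{main} to kill the $i$th and $(i-1)$st homologies of the graded piece for large $k$. No gaps.
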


\begin{proof}
By \cite[Corollary 7.8]{CS}, the short exact sequence
\[
0\rightarrow \Gamma_Z^{\varsigma}(X, \mathsf E)_k/\Gamma_{Z}^{\varsigma}(X, \mathsf E)_{k+1}\rightarrow \Gamma_Z^{\varsigma}(X, \mathsf E)/\Gamma_{Z}^{\varsigma}(X, \mathsf E)_{k+1} \rightarrow \Gamma_Z^{\varsigma}(X, \mathsf E)/\Gamma_{Z}^{\varsigma}(X, \mathsf E)_{k}\rightarrow 0
\]
in the category $\CS\mathrm{mod}_G$ yields a long exact sequence
\[
 \begin{array}{rcl}
&\rightarrow&\oH_i^{\CS}(G;\Gamma_Z^{\varsigma}(X, \mathsf E)_k/\Gamma_{Z}^{\varsigma}(X, \mathsf E)_{k+1})
\rightarrow\oH_i^{\CS}(G;\Gamma_Z^{\varsigma}(X, \mathsf E)/\Gamma_{Z}^{\varsigma}(X, \mathsf E)_{k+1})\\
&\rightarrow&\oH_i^{\CS}(G;\Gamma_Z^{\varsigma}(X, \mathsf E)/\Gamma_{Z}^{\varsigma}(X, \mathsf E)_{k})
\rightarrow\oH_{i-1}^{\CS}(G;\Gamma_Z^{\varsigma}(X, \mathsf E)_k/\Gamma_{Z}^{\varsigma}(X, \mathsf E)_{k+1})\rightarrow
 \end{array}
\]
of Schwartz homologies.
By Lemma \ref{borel} and Theorem \ref{shapiro},
\[
 \begin{array}{rcl}
 \oH_i^\CS(G;\Gamma_Z^{\varsigma}(X, \mathsf E)_k/\Gamma_{Z}^{\varsigma}(X, \mathsf E)_{k+1})&\cong&
\oH_i^{\CS}(G;\Gamma^{\varsigma}(Z,\Sym^k(\mathrm{N}^*_{Z}(X))\otimes\SE|_Z))\\
&\cong&\oH_i^{\CS}(G;\ind_{G_z}^G(\Sym^k(\mathrm{N}^*_{z})\otimes\SE_z))\\
 &\cong& \oH_{i}^{\CS}(G_z;\Sym^k(\mathrm{N}^*_{z})\otimes\SE_z\otimes\delta_{G/G_z}),
 \end{array}
\]
which vanishes for sufficiently large $k$, by the second assumption in Theorem \ref{main}. Similarly,
$\oH_{i-1}^{\CS}(G;\Gamma_Z^{\varsigma}(X, \mathsf E)_k/\Gamma_{Z}^{\varsigma}(X, \mathsf E)_{k+1})$ vanishes for sufficiently larke $k$. Thus the natural map
\[
\oH_{i}^\CS(G;\Gamma_{Z}^{\varsigma}(X, \mathsf E)/\Gamma_{Z}^{\varsigma}(X, \mathsf E)_{k+1})\rightarrow\oH_{i}^\CS(G;\Gamma_{Z}^{\varsigma}(X, \mathsf E)/\Gamma_{Z}^{\varsigma}(X, \mathsf E)_k)
\]
is a linear isomorphism for sufficiently large $k$.
\end{proof}

By Lemmas \ref{factor}, \ref{comu} and \ref{fine}, we conclude that $\oH_{i}^\CS(G;\Gamma_{Z}^{\varsigma}(X, \mathsf E))$ is finite dimensional for every $i\in\BZ$, i.e., $\Gamma_{Z}^{\varsigma}(X, \mathsf E)$ is homologically finite, which completes the proof of Theroem \ref{main}.

%%%proof of thm main2
Now we prove Theorem \ref{main2}. Let $x\in X$.
Since all the fibres of $\SE$ are finite dimensional, the representation $\mathsf E_x\otimes\Sym^k(\mathrm{N}_{x}^*)\otimes\delta_{G/G_x}$ of $G_x$ is finite dimensional, for every $k\geq0$. By \cite[Theorem 7.7]{CS},  it must be homologically finite. Since the trivial representation of $G_x$ does not occur as a subquotient of $\mathsf E_x\otimes\Sym^k(\mathrm{N}_{x}^*)\otimes\delta_{G/G_x}$ for sufficiently large $k$, it follows from \cite[Proposition 7.11 and Corollary 7.8]{CS} that $\oH_{i}^{\CS}(G_x;\Sym^k(\mathrm{N}^*_{x})\otimes\SE_x\otimes\delta_{G/G_x})$ vanishes for every $i\in\BZ$ and sufficiently large $k$. Now Theorem \ref{main2} follows from Theorem \ref{main}.

\end{document}